\documentclass[a4paper,10pt]{article}
\usepackage{paper-en}
\usepackage{hyperref}

\def\thetitle{An isoperimetric inequality for mean shadow}
\def\theauthors{Roman Karasev, Anton Petrunin, and Alexander Plakhov}

\hypersetup{colorlinks=true,
citecolor=black,
linkcolor=black,
anchorcolor=black,
filecolor=black,
menucolor=black,
urlcolor=black,
pdftitle={\thetitle},
pdfauthor={\theauthors}
}
\begin{document}

\title{\thetitle}
\author{\theauthors}
\date{}
\maketitle

\newcommand{\Addresses}{{\bigskip\footnotesize

\noindent  Roman Karasev,
\par\nopagebreak
\textsc{Institute for Information Transmission Problems RAS, Bolshoy Karetny per. 19, Moscow, Russia 127994.}

\medskip

\noindent   Anton Petrunin,
\par\nopagebreak
 \textsc{Math. Dept. PSU, University Park, PA 16802, USA.}

\medskip

\noindent Alexander Plakhov,
\par\nopagebreak
\textsc{Center for R{\&}D in Mathematics and Applications, Department of Mathematics, University of Aveiro, Portugal.}
\par\nopagebreak
\textsc{Institute for Information Transmission Problems RAS, Bolshoy Karetny per. 19, Moscow, Russia 127994.}

}}

\begin{abstract}
We prove an isoperimetric inequality in terms of the mean shadow area.
\end{abstract}

\section{Introduction}

The $k$-dimensional Hausdorff measure will be denoted by $\mathcal H^k$;
we will use the standard normalization, so the $k$-dimensional unit cube has unit $\mathcal H^k$-measure.

Given a subset $K\subset \RR^n$, let us denote by $\fav_k K$ its $k$-dimensional \emph{mean shadow area}; that is, the average of $\mathcal H^k$-measures of the orthogonal projections of $K$ onto $k$-dimensional subspaces in $\RR^n$.
The mean shadow area is defined for any Borel set; see Proposition~\ref{prop:def}.
The $1$-dimensional mean shadow area $\fav_1K$ is also known as \emph{Favard length}; see \cite{Laba2012,Damian2024}.

\begin{thm}{Theorem}\label{thm:main}
Let $K\subset \RR^n$ be a compact subset and let $B\subset \RR^n$ be a ball.
Then
\[
\fav_{n-1}K= \fav_{n-1}B
\quad\Rightarrow\quad
\mathcal H^n K\le \mathcal H^n B.
\]
Moreover, if equality holds, then $K$ contains a congruent copy of $B$ and the remaining part has vanishing volume.
\end{thm}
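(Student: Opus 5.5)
The plan is to reduce to the case of a ball by Steiner symmetrization. Steiner symmetrization preserves $\mathcal H^n$, and the key claim is that it does not increase $\fav_{n-1}$, even for non-convex compact sets. Granting this, take a sequence of symmetrizations $K_i$ of $K$ converging in the Hausdorff sense to a ball $B'$ with $\mathcal H^n B'\ge \mathcal H^n K$. For compact sets Steiner symmetrization only gives upper semicontinuity of volume in the limit, so I will use the standard version: symmetrizations of compact sets converge in Hausdorff distance to a ball $B'$ with $\mathcal H^n B'\ge\mathcal H^nK$. Then I pass to the limit in the shadows. If $K_i\to B'$ in the Hausdorff sense, then every projection of $B'$ is contained in the Hausdorff limit of the projections of $K_i$. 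By upper semicontinuity of Lebesgue measure on compact sets under Hausdorff convergence, $\mathcal H^{n-1}(\pi_u B')\ge\limsup_i \mathcal H^{n-1}(\pi_u K_i)$ fails in the wrong direction. So instead I will use a fixed $\varepsilon$-neighborhood: $\pi_u B'$ shrunk by $\varepsilon$ lies inside $\pi_u K_i$ for large $i$ (apply this to a slightly smaller concentric ball, whose projections are in the interior). Fatou and Proposition~\ref{prop:def} then give $\fav_{n-1}B'\le\liminf\fav_{n-1}K_i\le \fav_{n-1}K=\fav_{n-1}B$. Hence $\mathcal H^nK\le\mathcal H^nB'\le\mathcal H^nB$.

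The key monotonicity step goes as follows. Fix a direction $e$ and let $S$ denote Steiner symmetrization in direction $e$. Let $v$ be a unit vector and $v'$ its reflection in $e^\perp$. The reflection $\rho$ in $e^\perp$ preserves the invariant measure on the sphere, so it suffices to prove
\[
\mathcal H^{n-1}(\pi_v SK)\le \tfrac12\bigl(\mathcal H^{n-1}(\pi_vK)+\mathcal H^{n-1}(\pi_{v'}K)\bigr),
\]
where $\pi_v$ is projection along $v$ onto $v^\perp$. Here $\pi_vSK$ and $\pi_{v'}SK$ are congruent because $SK$ is $\rho$-symmetric. Averaging this inequality over $v$ gives $\fav_{n-1}SK\le\fav_{n-1}K$.

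To prove the displayed inequality, I slice by the affine $2$-planes parallel to $\mathrm{span}(e,v)$. Projection along $v$ and along $v'$ both preserve these planes, and so does $S$. By Fubini this reduces the inequality to a planar statement. For a compact set $A$ in the plane, with $S$ the symmetrization along $e$ and $v,v'$ symmetric with respect to $e^\perp$, it reads
\[
|\pi_v SA|\le\tfrac12(|\pi_vA|+|\pi_{v'}A|).
\]
To prove it, parametrize lines in direction $v$ by their intersection with $e^\perp$. A line $\ell$ meets $SA$ iff some fiber $A_t$ (along $e$, at height $t$ on the $e^\perp$ axis) has length at least the distance needed. Then I compare with the pair of lines in directions $v$ and $v'$ through the corresponding points. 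The planar inequality is the Brunn--Minkowski-type step: the set of lines meeting $SA$ is contained in the ``half-sum'' of the sets of lines meeting $A$ in directions $v$ and $v'$, and the one-dimensional Brunn--Minkowski inequality $|X+Y|\ge|X|+|Y|$ finishes it.

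I expect this planar inequality to be the main obstacle; making it rigorous for arbitrary non-convex $A$ is the delicate point. I would first try to prove it for finite unions of segments parallel to $e$ and then approximate.

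For the equality case, I would analyze when equality persists. Suppose equality holds. Then every symmetrization step must be an equality, and the limiting inequalities must be equalities; in particular $\mathcal H^nK=\mathcal H^nB'$ with $B'$ congruent to $B$. I would then show that equality in the planar Brunn--Minkowski step forces each planar slice, up to measure zero, to have its $v$- and $v'$-shadows coming from the same ``core''. This should yield that the Lebesgue-density points of $K$ form a set whose closure $K'\subset K$ satisfies $\fav_{n-1}K'=\fav_{n-1}K$ and $\mathcal H^nK'=\mathcal H^nK$. Equality in the classical isoperimetric-type inequality for the convex hull of $K'$ (Cauchy's formula plus the isoperimetric inequality) should then force $K'$ to be a ball, giving the stated conclusion that $K$ contains a congruent copy of $B$ and $K\setminus B$ has zero volume.
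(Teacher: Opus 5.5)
\textbf{The key monotonicity step is false, not just unproved.} The planar inequality you reduce to fails as soon as the fibers of $A$ in direction $e$ are disconnected. Take $e=(0,1)$, $v,v'=(1,\pm1)/\sqrt2$, $P=[0,\delta]\cup[1,1+\delta]$ with $\delta<\tfrac14$, and $A=\{(x,y): x-y\in P,\ x+y\in P\}$. This is four small squares centered near $(0,0)$, $(\tfrac12,\pm\tfrac12)$, $(1,0)$.

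Lines in direction $v$ (resp.\ $v'$) are labeled by $x-y$ (resp.\ $x+y$). The squares shadow each other in pairs, so $|\pi_vA|=|\pi_{v'}A|=|P|/\sqrt2=\sqrt2\,\delta$.

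In $SA$, the two squares on the vertical strip $x\approx\tfrac12$ merge into the rhombus with vertices $(\tfrac12,0)$, $(\tfrac12+\delta,0)$, $(\tfrac{1+\delta}2,\pm\delta)$. On this rhombus $x-y$ fills $[\tfrac{1-\delta}2,\tfrac{1+3\delta}2]$, which is disjoint from $[0,\delta]$ and $[1,1+\delta]$. Hence $|\pi_vSA|=2\sqrt2\,\delta$, twice your right-hand side.

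Your ``half-sum'' containment would follow from $SA\subset\tfrac12(A+\rho A)$, but that holds only when the fibers are intervals (e.g.\ convex $A$). The one-dimensional Brunn--Minkowski inequality bounds $|\tfrac12(X+Y)|$ from below; it does not make $\tfrac12(X+Y)$ contain a centered interval of length $|A_x|$. Here $\tfrac12(P+P)$ has measure $3\delta$ and does not contain the middle shadow.

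\textbf{The averaged statement is false too, so no better proof of the planar step can help.} Steiner symmetrization can increase the mean shadow of a non-convex compact set (Venetian blinds). Let $A$ be the union over $j=0,\dots,k-1$ of the parallelograms $\{j/k\le x\le (j+1)/k,\ 0\le y-x+j/k\le\varepsilon\}$.
\begin{itemize}
\item Every fiber $A_x$ with $0\le x\le 1$ has length at least $\varepsilon$. So $SA\supset[0,1]\times[-\tfrac\varepsilon2,\tfrac\varepsilon2]$, giving $\pi\,\fav_1 SA\ge 2$.
\item As $\varepsilon\to0$, $\fav_1A$ tends to the Favard length of the $k$ slope-one slats. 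The lines $y=mx+b$ meeting the slats have $b$ in a set of length at most $\min\{|m|,|m-1|\}+(1+|m|)/k$. Integrating against the invariant measure $(1+m^2)^{-3/2}\,db\,dm$ gives $\pi\,\fav_1A\le 1.60+4/k+o_\varepsilon(1)$.
\end{itemize}
Thus $\fav_1 SA>\fav_1A$ for large $k$ and small $\varepsilon$. Taking products with a large cube should give the same phenomenon in $\RR^n$.

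\textbf{Your limit step has an independent gap.} Hausdorff convergence $K_i\to B'$ does not put the shadows of a shrunk concentric ball inside $\pi_uK_i$: finite nets of $B'$ converge to $B'$ yet have null shadows. So you only get upper semicontinuity of $\fav_{n-1}$, which is the wrong direction.

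\textbf{How the paper avoids this.} Because of this non-monotonicity, the paper does not symmetrize at all.
\begin{itemize}
\item In Lemma~\ref{lem:support} it finds a point of Almgren's frontier $\partial K\cap\partial(\conv K)$ where a smooth strictly convex $C\supset K$ touches $K$ with mean curvature at least $(n-1)(\kappa(K)-\varepsilon)$. This uses that the Gauss curvature integrates to $n\omega_n$ over the frontier, together with Crofton and AM--GM.
\item Lemma~\ref{lem:variation} shows that chopping a thin layer near that point loses at most $\tfrac{n\omega_n}{\omega_{n-1}H(p)}$ times as much volume as mean shadow.
\item Zorn's lemma applied to an inclusion-minimal counterexample then gives the contradiction.
\end{itemize}
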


\begin{wrapfigure}{r}{43 mm}
\vskip-4mm
\centering
\includegraphics{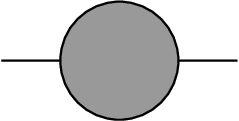}
\end{wrapfigure}

For the equality case, we give only a necessary condition, which is not sufficient; an example can be guessed from the picture.
On the other hand, equality holds for the union of a ball with any set of vanishing mean shadow area, but these are not the only examples; the latter follows from the result of Kenneth Falconer \cite[Theorem 5.1]{falconer}.

By \cite[3.3.13]{federer} we have
\[\frac{n\cdot\omega_n}{\omega_{n-1}}\cdot \fav_{n-1}K
\le\mathcal H^{n-1} (\partial K),\]
where $\omega_n$ denotes the volume of the $n$-dimensional unit ball.
Moreover, by Crofton's formula the equality holds if $K$ is convex.
Therefore, for convex sets our theorem is just a reformulation of the standard isoperimetric inequality,
but in the general case, it is more exact.

\begin{thm}{Corollary}\label{cor:k-shadows}
Let $K\subset \RR^n$ be a compact subset.
Denote by $r_k(K)$ the radius of a ball $B_k$ such that $\fav_k K=\fav_k B_k$.
Then
\[
r_1(K)\ge r_2(K)\ge\dots\ge r_n(K).
\]

Moreover, given a sequence $r_1\ge r_2\ge\dots\ge r_n\ge 0$, there is a compact set $K\subset \RR^n$ with $r_i(K)$ arbitrarily close to $r_i$ for each $i$.
\end{thm}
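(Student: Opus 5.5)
The plan is to prove the chain of inequalities one link at a time, by reducing the comparison of $r_k$ and $r_{k+1}$ to Theorem~\ref{thm:main} applied inside $(k+1)$-dimensional subspaces, and then to treat the realization statement by an explicit construction.

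\emph{Step 1: the inequality $r_{k+1}\le r_k$.}
Fix $k$ with $1\le k<n$. Averaging over the Grassmannian $\mathrm{Gr}(n,k)$ is the same as first choosing a random $(k+1)$-plane $W$ and then a random $k$-plane $V\subset W$. Since the projection onto $V$ factors through the projection onto $W$, we get
\[\fav_k K=\int_{\mathrm{Gr}(n,k+1)}\fav_k^{W}(\pi_W K)\,dW,
\qquad
\fav_{k+1} K=\int_{\mathrm{Gr}(n,k+1)}\mathcal H^{k+1}(\pi_W K)\,dW,\]
where $\fav_k^{W}$ is the $k$-dimensional mean shadow area computed inside $W\cong\RR^{k+1}$. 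Let $\rho(W)$ be the radius of a $(k+1)$-ball in $W$ with the same mean $k$-shadow as $\pi_W K$. Theorem~\ref{thm:main}, applied in $W$ to the compact set $\pi_W K$, gives $\mathcal H^{k+1}(\pi_W K)\le\omega_{k+1}\rho(W)^{k+1}$. The $k$-shadow of a ball of radius $\rho$ is $\omega_k\rho^k$, so
\[\omega_k r_k^k=\int\omega_k\rho^k\,dW
\qquad\text{and}\qquad
\omega_{k+1}r_{k+1}^{k+1}\le\int\omega_{k+1}\rho^{k+1}\,dW.\]
This is the wrong direction for Jensen, so the pointwise reduction alone does not finish.

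\emph{Step 2: fixing the direction.}
One option is to redo the reduction with $(k+1)$-planes replaced by lines. Another is to use Theorem~\ref{thm:main} together with the elementary bound $\rho(W)\le R$, where $R$ is the circumradius; this is also insufficient. The cleaner route is to derive the inequality $r_{k+1}\le r_k$ directly from the proof mechanism of Theorem~\ref{thm:main}. I expect that proof to go as follows: the shadow of $K$ in direction $u$ is at least the shadow of the Steiner-type symmetral, or, by a Fubini/Crofton argument over lines, $\fav_{n-1}$ controls the measure of the set of lines meeting $K$, and the volume is then bounded through a Blaschke--Petkantschin or Riesz-type rearrangement.

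Granting such a line-counting inequality in every dimension, I would express $\fav_k$ of $\pi_W K$ and $\mathcal H^{k+1}(\pi_W K)$ as functionals of the same set of lines. The fallback I would actually try first is simpler: the inequality $r_k^{k}\ge\bigl(\int\rho^{k+1}\bigr)^{k/(k+1)}$ fails in general, so instead I would apply Theorem~\ref{thm:main} only for $k+1=n$ and pass to the $(k+1)$-dimensional shadows through Step 1. I regard this step as the main obstacle, and I would accept the weaker pointwise form only if it suffices.

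\emph{Step 3: realization.}
For the second statement I would take $K$ to be a union of a ball of radius $r_n$ with lower-dimensional pieces. A $j$-dimensional ``sphere-like'' set $S_j$, made of many tiny scattered balls of total mean $i$-shadow tuned appropriately, or a fractal set of dimension between $j-1$ and $j$, contributes essentially only to $\fav_i$ with $i\le j$.

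Concretely, I would take a round $j$-disc $D_j$ of radius $s_j$, replaced by a product of a Cantor-like dust so that its $i$-shadows vanish for $i\ge j$. It is simpler to keep $D_j$ itself: it has zero $i$-shadow for $i>j$ and positive $i$-shadow for $i\le j$. Choosing the radii $s_j$ inductively from $j=n$ down to $j=1$, together with scaled disjoint copies, and using the near-additivity of shadows for far-apart pieces, one can solve the triangular system $\fav_i K\approx\omega_i r_i^i$. The condition $r_1\ge\dots\ge r_n$ is exactly what makes each required increment nonnegative. Disjoint copies placed far apart have additive shadows, and by scaling and multiplicity we get arbitrary precision.
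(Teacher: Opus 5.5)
\textbf{There are genuine gaps in both halves: the inequalities are not proved, and the realization construction fails.}

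\textbf{The inequalities.} Your Step~1 is exactly the paper's argument; its whole proof is the sentence ``consider $k$-dimensional projection as a sequence of codimension~$1$ projections and averaging over them all''. Your objection to it is correct. Let $\rho_k(W)$ and $\rho_{k+1}(W)$ be the radii matching the $k$-shadow and the volume of $\pi_WK$ inside $W$. Theorem~\ref{thm:main} gives $\rho_{k+1}\le\rho_k$ pointwise. Also $r_k=\|\rho_k\|_{L^k}$ and $r_{k+1}=\|\rho_{k+1}\|_{L^{k+1}}\le\|\rho_k\|_{L^{k+1}}$. But $\|\rho_k\|_{L^{k+1}}\ge\|\rho_k\|_{L^k}$, so nothing follows unless $\rho_k(W)$ is essentially constant in $W$, which already fails for a flat disc. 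The paper's sketch does not say how this exponent mismatch is overcome, so you have located the real difficulty. Your Step~2 does not overcome it either:
\begin{itemize}
\item The symmetrization and line-counting remarks are guesses about a proof you have not seen. The paper proves Theorem~\ref{thm:main} by an Almgren-type chopping argument, not by rearrangement.
\item ``Granting such a line-counting inequality'' assumes what is to be proved.
\item Your fallback is Step~1 again.
\end{itemize}
The sharp bound in each $W$ is convex in the $k$-shadow, so it cannot survive averaging. What is needed is an argument that controls the average over $W$ as a whole, for instance a variational argument run directly on the pair $(\fav_k,\fav_{k+1})$ in $\RR^n$. None is given.

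\textbf{The realization.} The claim that $r_1\ge\dots\ge r_n$ makes every increment nonnegative is false, because flat discs are not extremal for their lower shadows. In $\RR^3$ a disc of radius $s$ has $\fav_2=\pi s^2/2$ and $\fav_1=\pi s/2$. For the target $(r_1,r_2,r_3)=(1,1,0)$, the disc giving $\fav_2=\pi$ has $s=\sqrt2$, and then already $\fav_1=\pi/\sqrt2>2$. Splitting it into several far-apart discs only increases $\fav_1$ at fixed $\fav_2$, and segments only add to $\fav_1$.

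The missing idea, which is the paper's device, is to use pieces that are extremal for all shadows up to their dimension. Take the $k$-skeleton $P^{(k)}$ of a fine polytope $P$ approximating a ball of radius $r$. Let $V$ be an $i$-plane with $i\le k$. Every nonempty fiber $P\cap\pi_V^{-1}(y)$ has an extreme point, and that point lies on a face of $P$ of dimension at most $i$. Hence $\pi_VP^{(k)}=\pi_VP$, so $r_i(P^{(k)})\approx r$ for $i\le k$, while $r_i(P^{(k)})=0$ for $i>k$.

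Even with these pieces, placing them far apart (as you, and the paper, propose) can overshoot. For $(2,2,1)$ in $\RR^3$, additivity forces the $3$- and $2$-dimensional pieces to have radii $1$ and $\sqrt3$. The $1$-dimensional piece would then need radius $2-1-\sqrt3<0$, and using several pieces of each type does not help.

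Nesting the pieces works instead. Take concentric polytopes $P_k$ with $B(0,(1-\delta)r_k)\subset P_k\subset B(0,r_k)$ and set $K=\bigcup_kP_k^{(k)}$. For every $i$-plane $V$, $\pi_VK$ agrees with $\bigcup_{k\ge i}\pi_VP_k$ up to an $\mathcal H^i$-null set. That union lies between the discs of radii $(1-\delta)r_i$ and $\max_{k\ge i}r_k=r_i$, so $r_i(K)\in[(1-\delta)r_i,\,r_i]$.
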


Note that for convex sets, a stronger statement follows from the Alexandrov--Fenchel inequality, that is, from the logarithmic concavity of the mixed volumes
\[
v_k=V(\underbrace{K,\ldots,K}_k,\underbrace{B,\ldots,B}_{n-k})
\]
as a function of $k$.

\begin{thm}{Corollary}\label{cor:borel}
Let $E\subset \RR^n$ be a Borel subset and let $B\subset \RR^n$ be a ball.
Then
\[
\mathcal H^n E= \mathcal H^n B
\quad\Rightarrow\quad
\fav_{n-1}E\ge \fav_{n-1}B.
\]
\end{thm}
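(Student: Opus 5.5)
We reduce Corollary~\ref{cor:borel} to Theorem~\ref{thm:main} by inner approximation with compact sets, combined with monotonicity and scaling of the mean shadow area.

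\emph{Step 1: basic properties.} If $E\subset E'$, then every projection of $E$ lies in the corresponding projection of $E'$, so $\fav_{n-1}E\le\fav_{n-1}E'$; here we use that the mean shadow area is defined for Borel sets (Proposition~\ref{prop:def}). For balls, $\fav_{n-1}$ of a ball of radius $r$ equals $\omega_{n-1}r^{n-1}$, which is continuous and strictly increasing in $r$. We may assume $\mathcal H^nE=\mathcal H^nB>0$ and $\fav_{n-1}E<\infty$, since otherwise there is nothing to prove.

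\emph{Step 2: inner approximation.} Since Lebesgue measure is inner regular on Borel sets, for every $\varepsilon>0$ there is a compact $K\subset E$ with $\mathcal H^nK>\mathcal H^nB-\varepsilon$. By monotonicity, $\fav_{n-1}K\le\fav_{n-1}E$.

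\emph{Step 3: apply Theorem~\ref{thm:main} to $K$.} Let $B_K$ be the ball with $\fav_{n-1}B_K=\fav_{n-1}K$; it exists by Step 1, possibly with radius zero. Theorem~\ref{thm:main} gives $\mathcal H^nK\le\mathcal H^nB_K$, so
\[
\mathcal H^nB_K>\mathcal H^nB-\varepsilon .
\]
If the radius of $B_K$ is zero, this forces $\mathcal H^nB<\varepsilon$, which is impossible once $\varepsilon<\mathcal H^nB$; so Theorem~\ref{thm:main} applies in a nondegenerate way for small $\varepsilon$. Let $r_\varepsilon$ be the radius of $B_K$ and $r$ the radius of $B$. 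Then $\omega_n r_\varepsilon^n>\omega_n r^n-\varepsilon$, hence
\[
\fav_{n-1}E\ge\fav_{n-1}K=\omega_{n-1}r_\varepsilon^{n-1}\ge \omega_{n-1}\bigl(r^n-\varepsilon/\omega_n\bigr)^{(n-1)/n}.
\]

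\emph{Step 4: pass to the limit.} Letting $\varepsilon\to0$ gives $\fav_{n-1}E\ge\omega_{n-1}r^{n-1}=\fav_{n-1}B$.

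The only delicate point is that the mean shadow area of a Borel set is well defined and monotone. Projections of Borel sets are analytic, hence measurable, and the measurability of the function that is averaged is the content of Proposition~\ref{prop:def}. Once that is granted, the argument above is routine.
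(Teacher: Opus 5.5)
Your proposal is correct and follows the paper's proof: approximate $E$ from inside by compact sets $K\subset E$ via inner regularity, use monotonicity $\fav_{n-1}K\le\fav_{n-1}E$, and apply Theorem~\ref{thm:main} to each $K$. You additionally write out the limit $\varepsilon\to0$ and the degenerate zero-radius case, which the paper leaves implicit.
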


In the case of equality, it is plausible that $E$ coincides with a ball up to measure zero, that is, there is a ball $B$ such that $\mathcal H^n(E\triangle B)=0$, where $\triangle$ denotes symmetric difference.

\medskip

Note that $\fav_k K$ is proportional to the $L^1$-norm of the $\mathcal H^k$-measure of the orthogonal projection of $K$ onto a $k$-dimensional subspace of $\mathbb{R}^n$, viewed as a function of the projection direction.
Let $\fav_k^p K$ denote the $L^p$-norm of the projection measure, $1 < p \le +\infty$.
By Hölder's inequality, $\fav_k^p K$ is greater than or equal to $\fav_k K$ times a positive constant, and the equality can occur only when all projection measures coincide (up to a set of measure zero).
As a result, we obtain the following corollary from Theorem~\ref{thm:main}.

\begin{thm}{Corollary}\label{cor:Lp}
Under the conditions of Corollary~\ref{cor:borel},
\[
\mathcal H^n E= \mathcal H^n B
\quad\Rightarrow\quad
\fav_{n-1}^p E\ge \fav_{n-1}^p B.
\]

\end{thm}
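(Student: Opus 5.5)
We derive Corollary~\ref{cor:Lp} from Corollary~\ref{cor:borel} by a Hölder comparison, using that the ball is the extremal case of Hölder's inequality. Fix $1<p\le\infty$ and write $\sigma$ for the normalized invariant measure on the Grassmannian $\mathrm{Gr}_{n-1}(\RR^n)$ (equivalently, the normalized measure on the unit sphere of directions $u$, with $V=u^\perp$). Set $f_E(V)=\mathcal H^{n-1}(\pi_V E)$, where $\pi_V$ is the orthogonal projection onto $V$. Then $\fav_{n-1}E=\int f_E\,d\sigma$, and $\fav^p_{n-1}E=c_p\,\|f_E\|_{L^p(\sigma)}$ for a fixed normalizing constant $c_p>0$. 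The same $c_p$ is used for $E$ and for $B$, so it plays no role in the comparison. Measurability of $f_E$ in $V$ for Borel $E$ is the same issue already handled for $\fav_{n-1}$ via Proposition~\ref{prop:def}, and we take it as given.

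The key steps are as follows.
\begin{enumerate}
\item Since $\sigma$ is a probability measure, Hölder's inequality (or Jensen's inequality for $t\mapsto t^p$, and the trivial bound $\int f\,d\sigma\le \sup f$ when $p=\infty$) gives $\|f_E\|_{L^p(\sigma)}\ge \|f_E\|_{L^1(\sigma)}=\fav_{n-1}E$. This holds even if $f_E\notin L^p$, in which case the left side is $+\infty$ and there is nothing to prove.
\item By Corollary~\ref{cor:borel}, the hypothesis $\mathcal H^nE=\mathcal H^nB$ gives $\fav_{n-1}E\ge\fav_{n-1}B=\|f_B\|_{L^1(\sigma)}$.
\item For the ball, $f_B$ is constant, equal to $\omega_{n-1}r^{n-1}$, so $\|f_B\|_{L^p(\sigma)}=\|f_B\|_{L^1(\sigma)}$.
\end{enumerate}
Chaining these,
\[
\|f_E\|_{L^p(\sigma)}\ge\|f_E\|_{L^1(\sigma)}\ge\|f_B\|_{L^1(\sigma)}=\|f_B\|_{L^p(\sigma)},
\]
and multiplying through by $c_p$ gives $\fav_{n-1}^pE\ge\fav_{n-1}^pB$.

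The only point requiring any care is normalization. If $\fav^p_{n-1}$ is defined through a non-normalized measure on the Grassmannian, the Hölder step acquires a constant factor. That factor is the same for $E$ and for $B$, and it is exactly attained for $B$ because $f_B$ is constant, so the inequality is unaffected. Beyond this, the corollary is a formal consequence of Corollary~\ref{cor:borel}; the real content lies in that corollary and in Theorem~\ref{thm:main}.
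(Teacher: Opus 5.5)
Your proof is correct and follows the paper's route. Hölder's inequality on the probability space of directions gives $\fav^p_{n-1}E\ge c\cdot\fav_{n-1}E$, Corollary~\ref{cor:borel} (that is, the main theorem) bounds $\fav_{n-1}E$ below by $\fav_{n-1}B$, and equality in Hölder for the ball, whose projection area is constant, closes the chain; your treatment of the normalization constant and of the case $f_E\notin L^p$ only makes explicit what the paper leaves implicit.
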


We were motivated by Newton's minimal resistance problem \cite{N}, which can be described as follows.
Consider a body that moves in a rarefied medium of particles, so the mutual interaction of the particles can be neglected.
One is interested in minimizing the time-averaged value of the force of resistance of the medium.

Our isoperimetric problem arises if one assumes that the body moves in a random direction without rotation and that the collisions between the body and the particles are completely inelastic: after colliding with the body, each particle loses its relative velocity and remains forever near the body.
This problem was also posed by Guillaume Aubrun \cite{guillaume}.

On the other hand, if the collisions are perfectly elastic, then one is led to the so-called problem of \emph{camouflaging in billiards}.
Let us briefly discuss this variant, since it is interesting in its own right.
We consider billiard motion in the exterior of some compact set $K$ with sufficiently nice boundary.
For almost every particle that initially
moves along some line $L$ with unit velocity $v$, there is an
asymptotic future velocity $v'$; that is, after finitely many
reflections from $K$, the particle will move forever along a straight
line with unit velocity $v'$.
We define the \emph{visibility index} of the body $K$ as the integral
of this squared change of velocity $|v-v'|^2$ over all lines $L$, with respect to
the natural measure on the space of lines.
We need to find a body of fixed volume with the smallest visibility index.

Among convex bodies, the solution is a ball; in this case the problem reduces to the isoperimetric inequality.
Among connected figures in the plane, the solution is a rough disk; see \cite{ARMA} and \cite[Theorem 6.1]{book2012}.
Compared with the convex case, this reduces the visibility index by approximately $1.22\%$.
Among possibly disconnected bodies or bodies in $\mathbb R^n,\, n \ge 3$, the solution is unknown.
For the class of bodies contained in a fixed ball, lower bounds are known; see \cite{cam1,cam2}.
In the general case, it is not even known whether the infimum is positive.

Additionally, it is known how to construct a body invisible in $m$ directions $(m=1,\,2,\ldots)$, provided that the angles between them are multiples of $\pi/m$; see \cite{invisNpoints}.
Some other results are known about invisibility in several directions or from several points in $\RR^2$ and $\RR^3$ \cite{book2012,fractal,invis2points}.

\paragraph{About the proof.}
\emph{Almgren's frontier} $\cont K$ of a set $K\subset \RR^n$ will be defined
as the intersection of its boundary and the boundary of its convex hull $\conv K$.
We observe that the outer normals to supporting hyperplanes to $\conv K$ at points in $\cont K$ cover the whole sphere.
Further, by monotonicity of mean shadow area with respect to inclusion, we have $\fav_{n-1}(\cont K)\z\le \fav_{n-1}K$.

Now let us assume that $K$ is a counterexample.
Using the two observations above, we find a point $p\in \cont K$ so that chopping $K$ in a small neighborhood of $p$ can produce a worse counterexample.
Then Zorn's lemma leads to a contradiction.

This type of argument was first used by Frederick Almgren in his proof of optimal isoperimetric inequality; see \cite{almgren-1986}.
Our proof might serve as a more elementary illustration of this idea.

\paragraph{Structure of the paper.}
In Section~\ref{sec:borel} we show that mean shadow area is defined for any Borel set and prove Corollary~\ref{cor:borel} modulo the main theorem.
In the following section we prove the main theorem.
In Section~\ref{sec:k-shadows}, we prove Corollary~\ref{cor:k-shadows}.
In the final section we sketch a more elementary proof of the 2-dimensional case;
it is based on the observation that for a connected compact set $K\subset\RR^2$ we have
\[\fav_1K=\fav_1(\conv K)
\quad\text{and}\quad
\mathcal H^2 K\le \mathcal H^2(\conv K).
\]

\section{Borel sets}\label{sec:borel}

\begin{thm}{Proposition}\label{prop:def}
The mean shadow area $\fav_kE$ is defined for any Borel set $E\subset \RR^n$ and any integer $1\le k\le n$.
\end{thm}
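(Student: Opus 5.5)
To make sense of $\fav_kE$ we have to check two things. First, for each $k$-dimensional subspace $V$, the projection $P_V E$ must be $\mathcal H^k$-measurable, so that $\mathcal H^k(P_VE)$ is defined. Second, the function $V\mapsto \mathcal H^k(P_VE)$ on the Grassmannian $\mathrm{Gr}(n,k)$ must be measurable with respect to the invariant probability measure, so that its average (possibly $+\infty$) is defined.

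Measurability of $P_VE$ is classical. The set $E$ is Borel, so its continuous image $P_VE$ is analytic (Suslin), and analytic sets are universally measurable; in particular $P_VE$ is $\mathcal H^k$-measurable in $V\cong\RR^k$, where $\mathcal H^k$ is Lebesgue measure. For compact $E$ this step is trivial, since $P_VE$ is then compact.

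For the measurability in $V$, I would trivialize the projections. Fix the group $O(n)$ with Haar measure and a fixed subspace $V_0=\RR^k$, and write $V=gV_0$. Then $\mathcal H^k(P_{gV_0}E)=\mathcal H^k(P_{V_0}(g^{-1}E))$. Consider the set
\[
A=\{(g,y)\in O(n)\times\RR^k:\ y\in P_{V_0}(g^{-1}E)\}.
\]
It is the projection along $\RR^n$ of the Borel set $\{(g,y,x): x\in E,\ P_{V_0}(g^{-1}x)=y\}$, so $A$ is analytic and hence measurable for the completed product measure. By Fubini--Tonelli for the completed product measure, the map $g\mapsto \mathcal H^k(A_g)$ is measurable with respect to the completion of Haar measure. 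Averaging over $O(n)$ is the same as averaging over $\mathrm{Gr}(n,k)$ with its invariant measure, because the latter is the pushforward of Haar measure. This shows that $\fav_kE\in[0,\infty]$ is well defined. The case $k=n$ is trivial: every projection is $E$ itself, so $\fav_nE=\mathcal H^nE$.

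The main obstacle is the measurability of the function of $V$: projections of Borel sets need not be Borel, so one has to work with analytic sets and completed measures rather than naive Fubini. An alternative route avoids this. By inner regularity of the finite measures involved, one can approximate $E$ from inside by compact sets, for which $V\mapsto\mathcal H^k(P_VK)$ is upper semicontinuous. One could then define $\fav_kE$ as a supremum over compact $K\subset E$, but matching that supremum with the average requires the same capacitability argument, so I would use the analytic-set approach directly.
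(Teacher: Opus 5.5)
Your proof is correct and takes the same route as the paper. Your set $\{(g,y,x)\}$ is the paper's $\widetilde E\subset\mathbb R^n\times\mathrm{O}(n)$ after the substitution $z=g^{-1}x$, and both arguments apply Suslin's theorem to its projection. You also spell out two steps the paper leaves implicit: that each individual projection is measurable, and the Fubini argument for the completed product measure that turns measurability of that projection into measurability of the function being averaged.
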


\begin{proof}
Consider the Borel set
\[
\widetilde E = \{(x,R)\in \RR^n\times \mathrm{O}(n)\ |\ R(x)\in E\}.
\]
The mean shadow area $\fav_kE$ can be defined as the measure of the projection of $\widetilde E$ to $\RR^k\times \mathrm{O}(n)$, assuming the normalized Haar measure on $\mathrm{O}(n)$.
By Suslin's theorem \cite[Section~1.10]{bogachev-i}, a projection of a Borel set is Lebesgue measurable, so $\fav_kE$ is defined.
\end{proof}

\begin{proof}[Proof of Corollary~\ref{cor:borel}]
From the inner regularity of the Lebesgue measure, any Borel set $E$ can be approximated in measure by compact subsets $K_i \subset E$, that is, $\mathcal H^n K_i \to \mathcal H^n E$ as $i \to \infty$.
From the inclusion $K_i\subset E$, we have $\fav_{n-1}E \z\ge \fav_{n-1}K_i$.
Hence the inequality for $E$ follows from the inequality for every $K_i$.
It remains to apply the main theorem.
\end{proof}

\section{Compact sets}\label{sec:compact}

Given a compact set $K$, consider the ball $B$ such that $\fav_{n-1}K=\fav_{n-1}B$.
Let us denote by $\kappa=\kappa(K)$ the normal curvature of $\partial B$, so $\tfrac1\kappa$ is the radius of $B$ and
\[
\frac{\omega_{n-1}}{\kappa^{n-1}}=\fav_{n-1}K=\fav_{n-1}(B),\quad \frac{\omega_{n}}{\kappa^{n}}=\mathcal H^n B.
\]

\begin{thm}{Lemma}\label{lem:support}
Let $K$ be a compact set in $\RR^n$.
Given $\eps>0$, there is a strictly convex body $C$ with smooth boundary $\partial C$ such that
\begin{enumerate}[(i)]
 \item $C\supset K$,
 \item $\partial C\cap K$ contains a single point, say $p$, and
 \item the mean curvature of $\partial C$ at $p$ is at least $(n-1)\cdot (\kappa(K)\z-\eps)$.
\end{enumerate}
Moreover, the same holds for some $\eps<0$ unless $\partial K\supset \partial(\conv K)$ or $\mathcal H^n K=0$.
\end{thm}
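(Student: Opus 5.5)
The plan is to argue by contradiction, reducing everything to Almgren's frontier $\cont K$ and the two observations from the introduction. Fix $\kappa'=\kappa(K)-\eps$ and assume that for \emph{every} strictly convex smooth body $C\supset K$ touching $K$ at a single point $p$, the mean curvature of $\partial C$ at $p$ is less than $(n-1)\kappa'$. I will show that this forces $\fav_{n-1}(\cont K)>\fav_{n-1}B'$, where $B'$ is a ball of radius $1/\kappa'$. Since $1/\kappa'>1/\kappa$, we have $\fav_{n-1}B'>\fav_{n-1}B=\fav_{n-1}K$. This contradicts monotonicity, $\fav_{n-1}(\cont K)\le\fav_{n-1}K$.

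\textbf{Step 1 (touching bodies exist in every direction).} For each unit vector $u$, take a supporting hyperplane of $\conv K$ with outer normal $u$; it meets $K$ at a point of $\cont K$. Replace $\conv K$ by a slightly enlarged smooth, strictly convex body whose boundary passes through one chosen such point $p$ and otherwise stays strictly outside $K$. One way is to add a small multiple of a strictly convex function centered at $p$ to the support data, then smooth. This gives admissible bodies $C$ touching at points $p$ whose normals range over the whole sphere. It also shows that the assumption constrains the local geometry of $\conv K$ from outside at every frontier point, in a viscosity sense.

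\textbf{Step 2 (curvature bound to area bound).} At an admissible touching point, let the principal curvatures of $\partial C$ be $k_1,\dots,k_{n-1}$. From $\sum k_i<(n-1)\kappa'$ and AM--GM we get $\prod k_i<\kappa'^{\,n-1}$. So the Gauss curvature of any admissible $C$ at $p$ is below $\kappa'^{\,n-1}$. Optimizing over all admissible $C$, this says the following: near frontier points, the inverse Gauss map $\nu\mapsto p(\nu)$ from the sphere to $\cont K$ expands $(n-1)$-volume by at least the factor $\kappa'^{-(n-1)}$, which is exactly the factor for the sphere of radius $1/\kappa'$. I would make this rigorous in the style of Aleksandrov's treatment of generalized Gauss curvature. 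The contrapositive form is: if some small set of normals $\Omega$ had frontier image of area less than $(\kappa'^{-(n-1)}-\delta)\,\mathcal H^{n-1}(\Omega)$, then a covering/density argument produces a point and a strictly convex smooth cap hugging $\conv K$ there with Gauss curvature, hence mean curvature after a suitable choice, at least $(n-1)\kappa'$. \emph{This is the step I expect to be the main obstacle.} Passing from pointwise upper curvature bounds on touching bodies to a measure-theoretic lower bound on the frontier requires care: the frontier may be a Cantor-like set, and one must choose the cap so that it touches only at one point. I would first try a Vitali covering on the sphere of normals combined with comparison against spherical caps of radius slightly less than $1/\kappa'$.

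\textbf{Step 3 (shadows of the frontier).} Fix a direction $u$. Frontier points whose normals $\nu$ satisfy $\langle\nu,u\rangle>0$ lie on the upper part of $\partial\conv K$, which projects injectively to $u^\perp$. Hence
\[
\mathcal H^{n-1}\bigl(\mathrm{pr}_{u^\perp}\cont K\bigr)\ \ge\ \int_{\langle\nu,u\rangle>0}\langle\nu,u\rangle\,\kappa'^{-(n-1)}\,d\nu ,
\]
and the right-hand side is the shadow of $B'$. Averaging over $u$ gives $\fav_{n-1}(\cont K)\ge\fav_{n-1}B'$, and the contradiction follows as described.

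\textbf{The case $\eps<0$.} Suppose $\partial K\not\supset\partial(\conv K)$ and $\mathcal H^nK>0$. Then some nonempty relatively open piece of $\partial\conv K$ misses $K$. I would show that the part of $K$ behind this gap makes $\fav_{n-1}(\cont K)<\fav_{n-1}K$ strictly: an open set of directions sees shadow of $K$ that is not covered by the shadow of the frontier. This uses $\mathcal H^nK>0$ to guarantee that $K$ has points strictly inside the hull. The margin lets us run Steps 1--3 with $\kappa'$ slightly larger than $\kappa(K)$, i.e. with some $\eps<0$.
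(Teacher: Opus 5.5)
\textbf{Gap in Step~2, the main case.} Your skeleton matches the paper's proof: normals of $\cont K$ cover the sphere, then AM--GM, a Crofton-type bound, and monotonicity. But Step~2, which carries all the content, is only announced, and working on $\partial(\conv K)$ itself it is genuinely hard. There is no a.e.\ defined Gauss curvature on $\partial(\conv K)$ whose integral over $\cont K$ recovers the Gauss image. Indeed, $\cont K$ may have zero $\mathcal H^{n-1}$-measure while its normals still cover the sphere. So the curvature carried by the frontier can be purely singular (vertices, creases, Cantor-like pieces), and at such points there is no Hessian to compare with. The nontrivial claim is this: from ``the Gauss image has large density near $p$'', produce a strictly convex cap outside $\conv K$, of large curvature, touching $K$ at one point only. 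A Vitali covering by spherical caps does not provide it.

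Step~1 is also inaccurate as stated. For a cube, no admissible $C$ has a facet normal at its touching point, so only a full-measure set of normals can be reached.

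\textbf{The missing idea: regularize first.} Pass to the closed $s$-neighborhood $K_s$. Since $\conv K_s=(\conv K)_s$, its boundary is $C^{1,1}$ with principal curvatures at most $1/s$. Hence the Gauss map is Lipschitz, $G$ exists a.e., and the area formula gives $\int_{\cont K_s}G=n\cdot\omega_n$. Combine this with
$\mathcal H^{n-1}(\cont K_s)\le\frac{n\cdot\omega_n}{\omega_{n-1}}\fav_{n-1}(\cont K_s)\le\frac{n\cdot\omega_n}{\omega_{n-1}}\fav_{n-1}K_s$.
This directly yields a point $x\in\cont K_s$ with a well-defined shape operator and $G(x)\ge\kappa(K_s)^{n-1}$; no contradiction argument is needed. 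A hypersurface $W_s$ with slightly smaller shape operator at $x$ touches $K_s$ only at $x$. Its inner $s$-equidistant touches $K$ only at the foot point $p$, and its curvatures only increase, namely $k_i\mapsto k_i/(1-s k_i)$. Finally, $\kappa(K_s)\to\kappa(K)$ as $s\to+0$ absorbs $\eps$.

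\textbf{The case $\eps<0$.} Your argument rests on a false claim: the strict inequality $\fav_{n-1}(\cont K)<\fav_{n-1}K$ can fail. Take a convex body $D$ with a flat facet $F$. Let $K=D\setminus V$, where $V$ is a small open ball centered at a relative interior point of $F$. Then $\conv K=D$, $\partial K\not\supset\partial D$, and $\mathcal H^nK>0$. Every line through the interior of $D$ meets the hyperplane of $F$ at most once, so it also meets $\partial D\setminus F\subset\cont K$. Therefore $\fav_{n-1}(\cont K)=\fav_{n-1}D=\fav_{n-1}K$.

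The margin must come from the Crofton step instead. A set of lines of positive measure meets $\cont K$ exactly once, so $\mathcal H^{n-1}(\cont K)<\frac{n\cdot\omega_n}{\omega_{n-1}}\fav_{n-1}(\cont K)$. The paper transfers this gap to $K_s$ via $\varlimsup_{s\to+0}\mathcal H^{n-1}(\cont K_s)\le\mathcal H^{n-1}(\cont K)$, proved with Fatou's lemma on the Crofton integrands. In your setup the corresponding slack sits in Step~3, where the lower sheet of the frontier fills holes in the projection of the upper sheet. It does not sit in the monotonicity step.
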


The second part of the lemma (with $\eps<0$) will be needed in the proof of the equality case.
One can prove that it holds if $K$ is not a ball, but we state only the weaker version needed in our proof.

Let us define \emph{Almgren's frontier} of a subset $K\subset \RR^n$ as
\[\cont K=\partial K\cap\partial(\conv K).\]

\parit{Proof for $\eps>0$.}
Let us denote by $K_s$ the closed $s$-neighborhood of $K$.
If $s>0$, then $\conv K_s$ has $C^{1,1}$-smooth boundary; in particular, its second derivatives exist almost everywhere and are bounded.
So the shape operator ($\Shape$), the Gauss curvature $G=\det(\Shape)$, and mean curvature $H=\tr(\Shape)$ are defined almost everywhere on $\partial(\conv K_s)$.
Note also that any hyperplane projection of $K_s$ is the closed $s$-neighborhood of the projection of $K$ to the same hyperplane, which implies continuous dependence of the area of the projection on $s\ge 0$.

Since every linear function attains its maximum on $\conv K_s$ at a point of $K_s$, the normals of $\cont K_s$ cover the whole sphere.
Hence,
\[
\int\limits_{x\in \cont K_s}G(x)=\mathcal H^{n-1} \SSS^{n-1}=n\cdot\omega_n.
\]
Therefore, the inequality
\[
G(x)\ge \frac{n\cdot\omega_n}{\mathcal H^{n-1} (\cont K_s)}
\]
holds for $x$ in a set of positive measure of $\cont K_s$.

From the inclusion $\cont K_s\subset K_s$ we have
\[
\fav_{n-1}(\cont K_s)\le \fav_{n-1}K_s.
\]
Further, since $\cont K_s\subset \partial (\conv K_s)$, the Crofton formula implies
\[
\mathcal H^{n-1} (\cont K_s)\le \frac{n\cdot\omega_n}{\omega_{n-1}}\cdot \fav_{n-1}(\cont K_s);
\]
the constant $\tfrac{n\cdot\omega_n}{\omega_{n-1}}$ is optimal; we have equality if $\cont K_s$ is a closed convex hypersurface.

Combining the last three inequalities with the definition of $\kappa(K_s)$ and its continuity in $s$ we get a point $x\in \cont K_s$ with well-defined shape operator and Gauss curvature at least $\kappa(K_s)^{n-1}$.

Let us take a smooth hypersurface $W_s$ with slightly smaller (with positive definite difference) shape operator at $x$ compared to that of $\partial(\conv K_s)$;
note that $W_s$ touches $K_s$ only at $x$.
Note that all the curvatures of $\partial \conv K_s$ were at most $\tfrac1s$, so all the curvatures of $W_s$ may be assumed strictly smaller than $\tfrac1s$.

\begin{wrapfigure}{r}{49 mm}
\vskip-0mm
\centering
\includegraphics{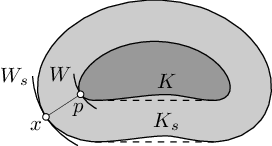}
\end{wrapfigure}

Let $W$ be the inner $s$-equidistant hypersurface of $W_s$.
Note that $W$ touches $K$ in a single point, say $p$, which is the nearest-point projection of $x$ to $K$.
From the above observation on curvatures it follows that $W$ is still convex and smooth at $p$.
It remains to take a small smooth closed strictly convex neighborhood $N$ of $\conv K$, chop it by $W$ and smooth the resulting body near the corner $W\cap \partial N$ while keeping strict convexity.
The hypersurface $W$ was already strictly convex, so $C$ is strictly convex in a neighborhood of $p$.
It may be made strictly convex by smoothing its support function outside of the corresponding neighborhood of the normal at $p$.

The bound on the mean curvature follows from the inequality of arithmetic and geometric means.
\qeds

\begin{proof}[Proof for $\eps<0$]
Now assume $\mathcal H^n K\ne 0$; so, $\conv K$ is nondegenerate.
If $\partial K\z{\not\supset} \partial(\conv K)$, then the Crofton formula implies
\[
\mathcal H^{n-1} (\cont K) < \frac{n\cdot\omega_n}{\omega_{n-1}}\cdot \fav_{n-1}(\cont K).
\]
Note that $\cont K_s$ is the set of points $x\in \partial(\conv K_s)=\partial (\conv K)_s$ whose closest point $p$ in $\conv K$ belongs to $\cont K$.

Let us compare $\mathcal H^{n-1}(\cont K_s)$ and $\mathcal H^{n-1}(\cont K)$ using the Crofton formula;
namely, if $X$ lies in the boundary of a convex set, then $\mathcal H^{n-1} X$ is the integral of the number of points in $X\cap \ell$ over straight lines $\ell$.
Almost every line $\ell$ either does not intersect $\partial(\conv K)$ or intersects it in two smooth points of $\partial(\conv K)$.
Consider only such lines.
We have the following cases:

Case 1: $\ell$ has no intersection with $\conv K$.
Then $\ell$ does not intersect $\conv K_s$ either for sufficiently small $s$.

Case 2: both points of $\ell\cap\partial(\conv K)$ do not belong to $\cont K$.
Then they both are at positive distance from $\cont K$ and $\ell$ does not intersect $\cont K_s$ for sufficiently small $s$.
Indeed, the two points of intersection $\ell\cap\partial(\conv K_s)$ tend to the two points of intersection $\ell\cap \partial(\conv K)$ as $s\to+0$, while $\cont K_s$ is a distance at most $s$ from $\cont K$.

Case 3: one of the points of $\ell\cap\partial(\conv K)$ does not belong to $\cont K$.
Then this point is at positive distance from $\cont K$.
Therefore, the corresponding point of $\ell\cap \partial(\conv K_s)$ does not belong to $\cont K_s$ for sufficiently small $s$ as in the previous case.

Case 4: both points of $\ell\cap\partial(\conv K)$ are in $\cont K$.
This does not affect the conclusion below.

In all the cases, as $s\to +0$, the integrand of the Crofton formula for $\mathcal H^{n-1}(\cont K_s)$ almost everywhere has upper limit less than or equal to the integrand of the Crofton formula for $\mathcal H^{n-1}(\cont K)$.
By Fatou's lemma,
\[\varlimsup_{s\to +0} \mathcal H^{n-1}(\cont K_s) \le \mathcal H^{n-1}(\cont K).\]



It follows that
\[
\mathcal H^{n-1} (\cont K_s)\le \frac{n\cdot\omega_n}{\omega_{n-1}}\cdot \fav_{n-1}(\cont K)-\delta,
\]
for some fixed $\delta>0$ and any small $s>0$.
Now the argument above with $\fav_{n-1}(\cont K_s)$ replaced with $\fav_{n-1}(\cont K)\le \fav_{n-1}K$ implies the statement for some $\eps<0$ depending on $\delta$.
\end{proof}

\begin{thm}{Lemma}\label{lem:variation}
Let $C$, $K$, and $p$ be as in the previous lemma, and let
\[
K_t=\set{x\in K}{\dist(\partial C, x)\ge t}.
\]
Then
\[
(1+o_t(1))\cdot(\mathcal H^n K-\mathcal H^n K_t)\le \frac{n\cdot \omega_n}{\omega_{n-1}\cdot H(p)}\cdot(\fav_{n-1}K-\fav_{n-1}K_t),
\]
where $H(p)$ denotes the mean curvature of $\partial C$ at $p$.
\end{thm}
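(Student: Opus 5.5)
We compare the volume removed, $K\setminus K_t$, with the shadow area lost by removing it. Since $\partial C\cap K=\{p\}$ and $C$ is strictly convex and smooth, compactness gives that for small $t$ the removed set $K\setminus K_t$ lies in a small neighborhood $U_t$ of $p$, shrinking to $p$ as $t\to0$. Indeed, points of $K$ within distance $t$ of $\partial C$ must converge to $\partial C\cap K=\{p\}$. Let $D_t=\set{x\in C}{\dist(\partial C,x)<t}\cap U_t$ be the thin cap-like shell of $C$ near $p$. Then $K\setminus K_t\subset D_t$.

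\emph{Upper bound on volume.} We estimate $\mathcal H^n(K\setminus K_t)\le \mathcal H^n(D_t)$. We do not need the size of $D_t$ itself but its comparison with shadows, so instead we fibre along lines. For a unit vector $u$ and the hyperplane $u^\perp$, consider the projection $\pi_u$. The point is that $\fav_{n-1}K-\fav_{n-1}K_t$ equals the average over $u$ of $\mathcal H^{n-1}\bigl(\pi_u(K)\setminus\pi_u(K_t)\bigr)$. A point $y\in \pi_u(K\setminus K_t)$ fails to lie in $\pi_u(K_t)$ whenever the whole line $\pi_u^{-1}(y)\cap K$ lies in the shell. So the strategy is a reverse estimate: we bound the volume of $K\setminus K_t$ by integrating, over lines, the length of the line inside $D_t$, and we show that lines meeting $K\setminus K_t$ but also meeting $K_t$ carry negligible contribution relative to the main term. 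Near $p$, after rescaling by $t$, the shell $D_t$ looks like the region between two parallel copies of the osculating paraboloid $x_n\ge \tfrac12\langle \Shape_p x',x'\rangle$, shifted by $t$.

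\emph{Key computation.} The cleanest route is to use that $C$ is convex. Hence every line meets $\partial C$ in at most two points, and the part of $K$ it sees within the shell is cut off from the rest. I would compare with the model in which $K$ is replaced by $C\cap U_t$: there $\mathcal H^n(C\setminus C_t)$ near $p$ and the lost mean shadow should satisfy the equality version. Indeed, for a convex body, $\fav_{n-1}$ is proportional to surface area by Crofton, and the first variation of $\mathcal H^{n-1}(\partial C)$ under inward parallel shift by $t$ is $-\int H\,t$, while the volume change is $\int t$. Localized at $p$, where $H\approx H(p)$, this gives ratio $\frac{n\omega_n}{\omega_{n-1}H(p)}$.

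To pass from $C$ to $K$, I would argue direction by direction and line by line. For fixed $u$, a line $\ell$ parallel to $u$ meeting $K\setminus K_t$ enters the shell through $\partial C$. Its chord in $D_t$ has length $\ell_t(\ell)$. We need the measure of $y$ with $\pi_u^{-1}(y)\cap K_t=\emptyset$ to dominate, on average over $u$, $H(p)\frac{\omega_{n-1}}{n\omega_n}\int \ell_t$ over those lines.

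\emph{Main obstacle.} This pointwise domination fails, since $K$ may have holes so that lines pass through $K_t$ too. I expect the main difficulty to be this step. The first thing I would try is to reverse the roles: integrate over points $x\in K\setminus K_t$ and over directions $u$. For each $x$ with $\dist(x,\partial C)=\tau<t$, the set of directions $u$ for which the line through $x$ in direction $u$ exits $C$ within the shell $\{\dist<t\}$ on one side has measure about that of a paraboloid cap. By the paraboloid model this is of order $\sqrt{t-\tau}$ in angle, with coefficient governed by the curvature. Averaging this over the normal distance and using the Crofton formula identifies the average with $H(p)$ times constants. Then one checks that such lines hit $K$ only inside the shell, because beyond the tangent region they leave $C$. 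The errors from curvature variation within $U_t$ are $o_t(1)$ by smoothness of $\partial C$ at $p$.
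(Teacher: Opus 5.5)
There is a genuine gap. Your first-variation computation for $K=C$ correctly identifies the constant, and you correctly see that line-by-line domination fails. But the remedy you propose does not prove the inequality for a general compact $K$.

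Knowing that the line through $x$ in direction $u$ is shallow (it meets $C$ only in the collar $\{\dist(\partial C,\cdot)<t\}$) tells you that $\pi_u(x)$ is lost. To turn $\int_{K\setminus K_t}\mu\{u\ \text{good for}\ x\}\,dx$ into shadow area, however, you must divide by the multiplicity of $\pi_u$, that is, by $\mathcal H^1(\ell\cap K)$, and you do not control this quantity.

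The natural bound valid for every $K$ is the chord $\mathcal H^1(\ell\cap C)$. With it, each point receives a weight depending on its depth $\tau$. In the planar paraboloid model with curvature $k$ this weight is $\frac{k}{\pi}\,\mathrm{arsinh}\sqrt{(t-\tau)/\tau}$. Its average over $\tau\in[0,t]$ is exactly the required $\frac{k}{\pi}=\frac{\omega_1 H(p)}{2\,\omega_2}$; this is essentially what your ``averaging over the normal distance'' recovers. But the weight tends to $0$ as $\tau\to t$.

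Averaging in depth is therefore legitimate only when $K\setminus K_t$ fills the collar uniformly, which is the model case. If the removed part of $K$ sits at depth close to $t$, your estimate gives almost nothing, while the lemma must hold for every $K$.

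The missing idea is to charge each lost line to a single point, its deepest point in $C$, rather than spreading it along the chord. Let $\zeta(u)\in\partial C$ be the point with outer normal $u$.

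For a projection direction $w$, the paper uses only the points $\zeta(u)-ru$ with $u\perp w$ and $0\le r\le t$. These are the inward normal segments over the shadow boundary of $C$. The line through such a point in direction $w$ is tangent there to the inner parallel body at depth $r$, so that point is the line's deepest point.

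The map $(u,r)\mapsto\pi_w(\zeta(u)-ru)$ is the normal parametrization of the $t$-collar of the convex body $\pi_w(C)$, hence injective. This collar misses $\pi_w(K_t)$, because each point of $K_t$ is the center of a $t$-ball inside $C$.

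So the lost shadow in direction $w$ is at least the integral of $\mathbf 1_K$ over this slice. The Jacobian is $(1+o_t(1))\,k(w)/G(p)$, where $k(w)$ is the normal curvature of $\partial C$ at $p$; it does not depend on $r$.

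Every $u$ lies in the slice of every $w\in u^\perp$, so averaging over $w$ replaces $k(w)$ by $H(p)/(n-1)$. Compare this with $\mathcal H^n K-\mathcal H^n K_t=(1+o_t(1))\frac{n\,\omega_n}{G(p)}\oint_{u}\int_0^t\mathbf 1_K(\zeta(u)-ru)\,dr$. This gives the lemma with a depth-independent weight, and hence for arbitrary $K$.
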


\begin{proof}
Let $u\in \SSS^{n-1}$ and $r\in \RR$.
Consider the unique point $\zeta(u)\in \partial C$ with outer normal vector $u$.
Then the map
\[
\xi\:(u,r)\mapsto\zeta(u)-r\cdot u
\]
gives a smooth parametrization of a neighborhood of $\partial C$.
Denote by $u_0$ the unit normal vector at $p$, so $p=\xi(u_0,0)$.

Denote by $\mathbf 1_K$ the characteristic function of $K$ and let $\phi(u,r)=\mathbf 1_K\circ \xi(u,r)$.
Then
\begin{align*}
\mathcal H^n K-\mathcal H^n K_t
=
&\int\limits_{u\in \SSS^{n-1}}
\int\limits_{r\in [0,t]} \phi(u,r)\cdot J(u,r)=
\\
=
\frac{n\cdot \omega_n\cdot (1+o_t(1))}{G(p)}
\cdot
&\oint\limits_{u\in \SSS^{n-1}}
\int\limits_{r\in [0,t]} \phi(u,r),
\end{align*}
where $\oint$ denotes the average along the sphere, and $J(u,r)$ is the Jacobian of $\xi$ at $(u,r)$.
The last equality follows since $J(u_0,0)=1/G(p)$ and if $r\le t$ for small $t>0$, then $\phi(u,r)\ne 0$ only if $u$ is in a sufficiently small neighborhood of $u_0$, so we may approximate the Jacobian by its value at $(u_0,0)=\xi^{-1}(p)$.

Let us turn to hyperplane projections.
Choose a unit vector $w$; denote by $L$ and $L_t$ the projections of $K$ and $K_t$ to $w^\perp$.
The $t$-neighborhood of $L_t$ is contained in the projection of $C$, so $L_t$ is at distance at least $t$ from the boundary of the projection of $C$.
This allows us to estimate
\[
\mathcal H^{n-1} L-\mathcal H^{n-1} L_t
\ge
\int\limits_{u\in \SSS^{n-2}}
\int\limits_{r\in [0,t]}
\phi(u,r)\cdot J_w(u,r),
\]
where $\SSS^{n-2}=\SSS^{n-1}\cap w^\perp$ and
$J_w(u,r)$ denotes the Jacobian at $(u,r)$ of the projection onto $w^\perp$ of the restriction $\xi|_{\SSS^{n-2}\times \RR}$.
This estimate holds since the points with $\phi(u,r)=1$ are projected injectively to $L$ with the Jacobian $J_w$ and do not get into $L_t$.

If $t$ is sufficiently small, then $\phi(u,r)\ne 0$ only if $u$ lies in a sufficiently small neighborhood of $u_0$.
This implies that $w$ may be considered almost perpendicular to $u_0$, which we assume further.

Denote by $w'$ the vector closest to $w$ in $\SSS^{n-1}\cap u_0^\perp$.
If $\phi(u,r)\ne 0$ and $0\le r\le t$, then, from the uniform continuity of $J_w$, we have
\[
J_w(u,r)=(1+o_t(1))\cdot J_{w'}(u_0,0)=(1+o_t(1))\cdot \frac{k(w')}{G(p)},
\]
where $k(w')$ denotes normal curvature of $\partial C$ in the direction $w'$ at $p$.
The last equality can be understood by approximating $C$ by the graph of a quadratic function, noting that a projection corresponds to the restriction of its dual, and noting that a diagonal element of an inverse matrix (corresponding to $k(w')$) is an $(n-2)\times (n-2)$ minor of the original matrix (corresponding to the Gaussian curvature of the projection) divided by the determinant of the whole matrix (corresponding to the Gaussian curvature before the projection).

Note that $\tfrac{1}{n-1}\cdot H(p)$ is the average of $k(w')$ for $w'\perp u_0$.
Therefore, taking the average, we get
\[
\fav_{n-1}K-\fav_{n-1}K_t\ge \frac {H(p)\cdot \omega_{n-1}\cdot(1+o_t(1))}{ G(p)}
\cdot\oint\limits_{u\in \SSS^{n-1}}
\int\limits_{r\in [0,t]}
\phi(u,r).
\]
Here we used that averaging over the whole sphere is the same as averaging over a variable point in a subsphere and then averaging over subspheres.
\end{proof}

\begin{proof}[Proof of Theorem~\ref{thm:main}.]
Assume that $K$ is a counterexample.
Then we can choose $\delta>0$ such that
\[
s_\delta(K) \df \mathcal H^n K - (1+\delta)\cdot \mathcal H^n B > 0,
\]
where $B$ denotes the ball such that $\fav_{n-1}K=\fav_{n-1}B$.

Remove from $K$ the set of measure zero of its points of zero density and then pass to the closure.
After this procedure, $K$ remains compact, its volume does not change, while mean shadow area $\fav_{n-1}K$ may decrease.
Moreover, now every nonempty intersection of $K$ with an open set has positive volume.
This ensures that the volume cut from $K$ by the procedure of Lemma~\ref{lem:variation} will be positive.

Applying Lemma~\ref{lem:variation} to $C$ and $p$ provided by Lemma~\ref{lem:support}, we get a compact set $K'=K_t\subset K$ for small $t>0$ such that
\[
s_\delta(K')>s_\delta(K).\leqno({*})
\]
Indeed, since $\mathcal H^n B=\tfrac{\omega_{n}}{\kappa(K)^{n}}=\tfrac{\omega_n\cdot \fav_{n-1}(K)^{n/(n-1)}}{\omega_{n-1}^{n/(n-1)}}$, we have
\begin{align*}
\mathcal H^n K-\mathcal H^n K'&< \left( \frac{n\cdot \omega_n}{(n-1)\cdot\omega_{n-1}\cdot \kappa(K)}+\eps\right)\cdot (\fav_{n-1}K-\fav_{n-1}K')=
\\
&=\left( \frac{n\cdot \omega_n\cdot \fav_{n-1}(K)^{1/(n-1)}}{(n-1)\omega_{n-1}^{n/(n-1)}}+\eps\right)\cdot (\fav_{n-1}K-\fav_{n-1}K').
\end{align*}
Taking the derivatives of $s_\delta(K)$ as a function of $\mathcal H^n K$ and $\fav_{n-1}K$, we get
\[\frac{\partial s_\delta(K)}{\partial\mathcal H^nK} = 1,
\quad
\frac{\partial s_\delta(K)}{\partial \fav_{n-1} K} = -(1+\delta)\cdot \frac{n\cdot\omega_n}{(n-1)\cdot\omega_{n-1}^{n/(n-1)}} \cdot \fav_{n-1}(K)^{1/(n-1)}.\]
Therefore, we get $({*})$ for an appropriate choice of $\eps>0$.

Now consider an inclusion chain $\mathcal Z$ of compact subsets $L\subset K$ satisfying $s_\delta(L)\z\ge s_\delta(K)$ for all $L\in\mathcal Z$.
The intersection of the chain is a compact set, say~$M$.
Let us show that
\[s_\delta(M)\ge s_\delta(K).\leqno({*}{*})\]
Indeed, choose an open $U\supset M$ such that $\mathcal H^n U\approx\mathcal H^n M$.
Since the sets in the chain are compact, some $L\in\mathcal Z$ lies in $U$.
Hence $\mathcal H^n L\le\mathcal H^n U$;
thus we can choose $L\in\mathcal Z$ such that $\mathcal H^n L$ is arbitrarily close to $\mathcal H^n M$.
Since $L\supset M$, we also have $\fav_{n-1}L\ge \fav_{n-1}M$, and hence $({*}{*})$.

Applying Zorn's lemma to the compact subsets $L\subset K$ satisfying $s_\delta(L)\z\ge s_\delta(K)$, we obtain an inclusion-minimal one, say $M$.
But the above argument applied to $M$ in place of $K$ contradicts the minimality.

Now assume that equality holds for $K$.
After subtracting a set of volume zero we can assume that nonempty intersections of $K$ with open sets have positive volume.
After that we need to show that $K$ is congruent to $B$.
If $\partial K\z\supset \partial(\conv K)$, then it follows from the equality case in the standard isoperimetric inequality.
Otherwise, repeating the above argument with $\delta=0$ instead of $\delta>0$ and taking into account the second part of \ref{lem:support}, we produce a compact subset $K'\subset K$ that contradicts the already proved inequality and arrive at a contradiction.
\end{proof}

\section{Mean \textit{k}-dimensional shadows}\label{sec:k-shadows}

\begin{proof}[Proof of Corollary~\ref{cor:k-shadows}.]
To prove the inequalities
\[
r_1(K)\ge r_2(K)\ge\dots\ge r_n(K),
\]
consider $k$-dimensional projection as a sequence of codimension $1$ projections and averaging over them all.

Let us approximate an $r$-ball by a polyhedron $P$ and then consider the $k$-skeleton $P^{(k)}$ of $P$.
Note that $r_i(P^{(k)})=0$ for $i>k$
and $r_i(P^{(k)})\approx r$ for $i\le k$.
Thus we approximate any sequence of the form
\[(\underbrace{r,\ldots,r}_k,\underbrace{0,\ldots,0}_{n-k}).\]
An arbitrary decreasing sequence can be approximated by a union of such $k$-dimensional sets for $k=1,\ldots,n$ placed sufficiently far apart from each other to make the average projection almost additive.
This proves the second part.
\end{proof}

\section{2-dimensional case}\label{sec:2D}

Here we outline a more elementary proof of Theorem~\ref{thm:main} in the 2-dimensional case.
Note that $\fav_1(K)$ is the average projection length; we will call it the Favard length.

\begin{proof}[Sketch of 2-dimensional case of \ref{thm:main}]
Suppose that $K\subset \RR^2$ is connected.
Since each projection of $K$ coincides with the corresponding projection of its convex hull $\conv K$, we have
\[\fav_1K = \fav_1(\conv K).\]
Since $\mathcal H^2K \le \mathcal H^2(\conv K)$, the problem reduces to the classical isoperimetric problem: minimize the perimeter of a convex set with fixed area.
In this case, the equality holds only for round disks.

Now consider the case when $K\subset \RR^2$ has a finite number of connected components, say $N$.
Repeating the argument above, one can assume that each connected component of $K$ is convex.

Let us show that in the class of sets with fixed area and no more than $N$ connected components, there is a set that minimizes the Favard length.

Consider a sequence of sets $K_k$ with prescribed area and with $\fav_1(K_k)$ tending to the lower bound.
Let $K_k^1, \ldots, K_k^N$ be the convex connected components of $K_k$; some of them may be empty.
Note that the perimeters and diameters of all $K_k^m$ are uniformly bounded, since $\fav_1K_k^m$ is uniformly bounded.
Choose a center $c_k^m\in K_k^m$.
By the Blaschke selection theorem~\cite[Section~6.1]{gruber2007}
, after passing to a subsequence the convex sets $K_k^m-c_k^m$ will tend to some convex set $K_\infty^m$ as $k\to\infty$ in the Hausdorff metric.
Hence we may replace each set $K_k^m$ with the translate $K_\infty^m + c_k^m$ keeping the same total area and keeping the limit of $\fav_1(K_k)$.
Indeed, the difference between the projection lengths (of unions of $N$ segments) is bounded by $2\cdot N$ times the Hausdorff distance.

Consider $K_k$ consisting of different translates $K_k^m = K_\infty^m+c_k^m$ of the fixed collection of $N$ convex sets.
For a pair of indices $i\neq j$, consider the sequence of distances $\dist(K_k^i, K_k^j)$. Either the sequence tends to infinity, or passing to a subsequence we make it bounded.
Doing so for every pair $i\neq j$, we may consider a graph $G$ on vertices $\{1,2,\ldots,N\}$ consisting of pairs $\{i,j\}$ with bounded $\dist(K_k^i, K_k^j)$ and having $\dist(K_k^i, K_k^j)\to +\infty$ for $\{i,j\}\z\notin G$.
If $G$ is connected then the sets $K_k$ have uniformly bounded diameter, so after a translation we may assume $c_k^m\to c_\infty^m$ by compactness and have an actual Hausdorff limit $K_k\to K$, which is the minimizer we need.

If $G$ is not connected, then splitting it into connected components corresponds to nontrivial splitting of $K_k$ into uniformly bounded sets $K_k^{(1)}, \ldots, K_k^{(c)}$, $c\ge 2$.
By the same compactness argument, after passing to a subsequence we may assume that $K_k^{(\ell)} - c_k^{(\ell)}$ tends to some $K_\infty^{(\ell)}$.
Then we replace $K_k^{(\ell)}$ with $K_\infty^{(\ell)} \z+ c_k^{(\ell)}$ keeping the area and the limit $\lim\limits_{k\to\infty}\fav_1(K_k)$.
After these manipulations every distance $\dist(K_k^{(\ell)}, K_k^{(\ell')})$ tends to infinity when $\ell\neq\ell'$.
This means that $\fav_1(K_k)$ tends to the sum $\sum_{\ell} \fav_1(K_\infty^{(\ell)})$.
But placing the same sets $K_\infty^{(\ell)}$ in arbitrary fixed position makes the average projection length of their union strictly smaller than $\sum_{\ell} \fav_1(K_\infty^{(\ell)})$ because of non-trivial overlap of their projections.
Hence $\sum_{\ell} \fav_1(K_\infty^{(\ell)})$ cannot be equal to $\inf \fav_1K$ over unions of $N$ convex sets and the case of non-connected $G$ is impossible.

Applying a variational argument similar to our main proof, we see that $\conv K$ has $C^1$-smooth boundary and $\cont K$ is a finite collection of circular arcs of radius
\[
R=\frac{2\cdot \mathcal H^2 K}{\fav_1 K}.
\leqno({*})
\]

Let us show that
\[\fav_1 K \ge 2\cdot \pi\cdot R,\leqno({*}{*})\]
and the equality is attained only if $K$ is a disk.
Indeed, $\fav_1 K\ge \fav_1(\cont K)$; the set $\cont K$ is the union of circular arcs of radius $R$ with total angular measure $2\cdot\pi$.
By the Crofton formula $\cont K$ contributes at least $2\cdot\pi\cdot R$ to $\fav_1 K$, but if $\partial(\conv K)\setminus \partial K\ne \emptyset$, this contribution is strictly larger than $2\cdot\pi\cdot R$.

Now observe that $({*})$ and $({*}{*})$ imply the statement in the theorem.

Finally, assume $K$ is a counterexample.
Consider its $\eps$-neighborhood $U_\eps$ with $\eps > 0$.
We have $\mathcal H^2 (U_\eps) \to \mathcal H^2 K$ as $\eps \to 0$.
Further, $\fav_1(U_\eps) \to \fav_1 K$ as $\eps \to 0$.
It follows that the infimum in the problem coincides with the infimum in the class of open bounded sets.

Each bounded open set $K$ has at most countably many connected components, say $C_1, C_2,\ldots$, and therefore can be approximated by open sets
\[K_i =C_1\cup\ldots\cup C_i.\]
We have $\mathcal H^2 K_i  \to \mathcal H^2K$ as $i \to \infty$ and $\fav_1K_i \le \fav_1K$ always.
It follows that the infimum in the problem coincides with the infimum in the class of open bounded sets with finitely many connected components, which is impossible.
\end{proof}

\parbf{Acknowledgments.}
We were partially supported by the following grants:
Roman Karasev by the state assignment 1.1.1-0029/25 of Ministry of Science and Higher Education of the Russian Federation for IITP RAS;
Anton Petrunin by the National Science Foundation grant DMS-2005279;
Alexander Plakhov by CIDMA through the Portuguese Foundation for Science and Technology (FCT), grants UID/4106/2025 and UID/PRR/4106/2025.

{\sloppy
\def\emph{\textit}
\printbibliography[heading=bibintoc]
\fussy
}

\Addresses

\end{document}